\newcommand{\R}{{\mathbb{R}}}
\newcommand{\E}{{\mathcal{E}}}
\renewcommand{\P}{{\mathcal{P}}}
\newcommand{\x}{{\boldsymbol{x}}}
\renewcommand{\u}{{\boldsymbol{u}}}
\renewcommand{\v}{{\boldsymbol{v}}}
\newcommand{\y}{{\boldsymbol{y}}}
\newcommand{\wt}{\widetilde}
\newtheorem{theorem}{Theorem}
\theoremstyle{remark}
\renewcommand{\phi}{\varphi}
\title{Pointwise behavior of Christoffel function on planar convex domains}
\author{A.\ Prymak}
\address{Department of Mathematics, University of Manitoba, Winnipeg, MB, R3T2N2, Canada}
\email{prymak@gmail.com}
\thanks{The first author was supported by NSERC of Canada Discovery Grant RGPIN 04863-15.}
\author{O.\ Usoltseva}
\address{Department of Mathematics, University of Manitoba, Winnipeg, MB, R3T2N2, Canada}
\email{usoltseo@myumanitoba.ca}
\thanks{The second author was supported by the University of Manitoba Graduate Fellowship and by the Department of Mathematics of the University of Manitoba.}
\keywords{Christoffel function, convex domains, algebraic polynomials, orthogonal polynomials, boundary effect}
\subjclass[2010]{42C05, 41A17, 41A63, 26D05, 42B99}
\begin{document}

\begin{abstract}
	We prove a general lower bound on Christoffel function on planar convex domains in terms of a modification of the parallel section function of the domain. For a certain class of planar convex domains, in combination with a recent general upper bound, this allows to compute the pointwise behavior of Christoffel function. We illustrate this approach for the domains $\{(x,y):|x|^\alpha+|y|^\alpha\le1\}$, $1<\alpha<2$, and compute up to a constant factor the required modification of the parallel section function, and, consequently, Christoffel function at an arbitrary interior point of the domain.
\end{abstract}

\maketitle

\section{Introduction and results}

Christoffel function associated with a compact set $D\subset\R^d$ having non-empty interior and with a positive weight function $w\in L_1(D)$ can be defined as
\begin{equation}\label{eqn:classic def}
\lambda_n(D,w,\x)=\left(\sum_{k=1}^N\phi_k(\x)^2\right)^{-1}, \quad \x\in D,
\end{equation}
where $\P_n=\P_{n,d}$ denotes the space of all real algebraic polynomials of total degree $\le n$ in $d$ variables, and $\{\phi_k\}_{k=1}^N$ is any orthonormal basis of $\P_n$ with respect to the inner product $\langle f,g \rangle = \int_D f(\y)g(\y)w(\y)d\y$. Christoffel function possesses the following well-known extremal property:
\begin{equation}\label{def_lambda}
\lambda_n(D,w,\x)=\min_{f\in\P_n,\, f(\x)=1}\int_D f^2(\y)w(\y) d\y, \quad \x\in D.
\end{equation}
For the uniform weight $w\equiv 1$, we set $\lambda_n(D,\x)=\lambda_n(D,w,\x)$.

Christoffel function is a valuable tool in various areas of analysis and mathematics. A common approach to the computation of Christoffel function is to use~\eqref{eqn:classic def} if an orthonormal basis of $\P_n$ is available, see, e.g.~\cite{Xu}. This becomes infeasible when $D$ is a rather general multivariate domain and the structure of the orthogonal polynomials on $D$ is unknown. A different approach is to use~\eqref{def_lambda} and comparison with other domains for which the behavior of Christoffel function is known, see, e.g.~\cite{Kr}, \cite{Di-Pr} and~\cite{Pr}. In this note we further develop this approach and focus on lower bounds on Christoffel function for planar convex domains and techniques for computation of the related geometric characteristics of the domain.

In what follows, $\partial D$ denotes the boundary of $D$. The constants $c$, $c(\cdot)$ are positive and depend only on parameters indicated in the parentheses (if any) and may be different at different occurrences even if the same notation is used.  The equivalence ``$\approx$'' is understood with absolute constants, namely, $A\approx B$ means $c^{-1}A\le B\le cA$.

Our main result is the following theorem.
\begin{theorem}\label{thm:main-lower}
	 Suppose $D\subset\R^2$ is a convex compact set with non-empty interior, $\x\in D\setminus\partial D$, $\u\in\R^2$ is a unit vector, $\beta$ and $\sigma$ are some positive constants. Let $\delta =\max\{q:\x+q\u\in D\}$ and
	\begin{equation}\label{eqn:def_li}
	l_i(t):= l_i(D,\x,t):=\max\{s:\x+(\delta-t)\u+(-1)^is\v\in D\}, \quad i=1,2, \quad 0<t<\beta,
	\end{equation}
	where $\v$ is one of the two unit vectors orthogonal to $\u$. If $\sigma n^{-2}<\delta<\beta/2$, then
	\begin{equation}\label{eqn:estimate-lower}
	\lambda_n(D,\x)\ge c(\beta,\sigma) n^{-2} \sqrt{\delta}\min_{i=1,2}\min_{\delta/2\le t \le \beta}\frac{l_i(t)}{\sqrt{t}}.
	\end{equation}
\end{theorem}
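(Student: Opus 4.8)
The plan is to use the extremal characterization \eqref{def_lambda}: it suffices to fix an arbitrary $f\in\P_n$ with $f(\x)=1$ and bound $\int_D f^2$ from below by the right-hand side of \eqref{eqn:estimate-lower}. Throughout I would normalize coordinates so that $\x$ is the origin, $\u=(0,1)$ and $\v=(1,0)$; a point is then written $(s,r)$, the boundary point $\x+\delta\u$ sits at $r=\delta$, and the cross-section at depth $t$ lies at height $r=\delta-t$. Set $m:=\min_{i=1,2}\min_{\delta/2\le t\le\beta} l_i(t)/\sqrt t$, so that the asserted bound reads $\lambda_n(D,\x)\ge c(\beta,\sigma)\,n^{-2}\sqrt\delta\,m$; one may assume $m>0$, as otherwise there is nothing to prove.

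First I would replace $D$ by an inscribed model region. By the definition of $m$ we have $l_i(t)\ge m\sqrt t$ for every $t\in[\delta/2,\beta]$ and $i=1,2$, so both endpoints $\x+(\delta-t)\u\pm m\sqrt t\,\v$ lie in $D$; convexity of $D$ then forces the whole horizontal chord joining them into $D$. Taking the union of these chords shows that $D$ contains the parabolic region
\[
G=\{(s,r): \delta-\beta\le r\le \delta/2,\ |s|\le m\sqrt{\delta-r}\},
\]
which contains $\x=(0,0)$ and whose upper edge sits at height $r=\delta/2$, i.e.\ at distance $\asymp\delta$ above $\x$. Since $G\subseteq D$, monotonicity of the Christoffel function in the domain gives $\lambda_n(D,\x)\ge\lambda_n(G,\x)$, and an anisotropic scaling $s\mapsto m\,s$ in the transverse variable (which preserves polynomial degree and pulls the constant $m$ out of the integral) reduces matters to the single model estimate $\lambda_n(G_1,\x)\ge c(\beta,\sigma)\,n^{-2}\sqrt\delta$ for the normalized horn $G_1=\{(s,r):\delta-\beta\le r\le\delta/2,\ |s|\le\sqrt{\delta-r}\}$. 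Note that replacing each $l_i(t)$ by its uniform lower bound $m\sqrt t$ is exactly what builds the $\min_t$ of \eqref{eqn:estimate-lower} into $G$.

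The heart of the argument is this model estimate, and here the two hypotheses enter: $\delta<\beta/2$ guarantees that $G_1$ has vertical extent $\asymp\beta$ below the neck, while $\delta>\sigma n^{-2}$ places the vertical problem in the favorable ``hard-edge'' regime. I would prove it by combining a one-dimensional and a transverse ingredient. Restricting $f$ to the axis $s=0$ gives a polynomial $g(r)=f(0,r)$ of degree $\le n$ with $g(0)=1$ on the interval $[\delta-\beta,\delta/2]$, whose near endpoint is at distance $\asymp\delta$ from $0$; the classical one-dimensional Christoffel estimate near an endpoint, applicable precisely because $\delta\gtrsim n^{-2}$, yields $\int g^2\gtrsim_\beta \sqrt\delta/n$. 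To upgrade this one-dimensional mass into the two-dimensional integral over $G_1$ with the correct power $\sqrt\delta$, I would not merely multiply by the neck width but instead decompose $G_1$ into rectangles $R_j$ at dyadic depths $t\asymp 2^j\delta$ (of height $\asymp 2^j\delta$ and width $\asymp\sqrt{2^j\delta}$) and, on each, use a Remez/Nikolskii-type comparison to transfer the pointwise normalization into a scale-appropriate lower bound $\int_{R_j}f^2\gtrsim n^{-2}\sqrt\delta$; selecting the dominant scale (or summing the contributions) then produces the claimed bound, with the dependence on $\beta$ and $\sigma$ tracked through the regime conditions.

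The main obstacle is exactly this last, genuinely two-dimensional, step: coupling the vertical hard-edge behaviour, which supplies the factor $\sqrt\delta/n$, with the parabolic transverse widening so that the final exponent of $\delta$ is $1/2$ rather than the $1$ or $3/2$ that a naive product or a crude isotropic rescaling of $G_1$ would give. Controlling this interaction uniformly across all depths $t\in[\delta/2,\beta]$, and keeping the resulting constant in the form $c(\beta,\sigma)$, is where the real care is required.
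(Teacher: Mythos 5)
Your opening reduction is sound and is in the same spirit as the paper's argument: with $m=\min_{i=1,2}\min_{\delta/2\le t\le\beta}l_i(t)/\sqrt{t}$, convexity of $D$ does yield the inscribed horn $G$, and domain monotonicity \eqref{eqn:compare} together with the anisotropic affine substitution \eqref{eqn:affine} correctly extracts the factor $m$. The genuine gap is the one step that remains, the model estimate $\lambda_n(G_1,\x)\ge c(\beta,\sigma)n^{-2}\sqrt{\delta}$, which you assert but whose proposed proof mechanism fails. Restricting $f$ to the line $s=0$ controls an integral over a set of planar measure zero and gives no lower bound on the area integral (on every other vertical line you have no normalization point, and the one-dimensional integral is not dominated by the two-dimensional one). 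The per-rectangle claim $\int_{R_j}f^2\gtrsim n^{-2}\sqrt{\delta}$ is simply false for any fixed $R_j$ not containing $\x$: a Chebyshev-type polynomial of degree $n$ in the vertical variable, normalized to equal $1$ at $\x$, is exponentially small on a rectangle at depth bounded away from $\delta$, because Remez-- and Nikolskii--type inequalities compare norms of a polynomial on the \emph{same} set and cannot transport the pointwise normalization $f(\x)=1$ to a distant set. Meanwhile the one rectangle that does contain $\x$ (height $\asymp\delta$, width $\asymp\sqrt{\delta}$) only yields $\int f^2\ge\lambda_n(R_0,\x)\asymp n^{-2}\delta^{3/2}$, the too-small exponent you yourself anticipate. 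Summing over $j$ merely restates the inequality to be proved, so the argument is circular exactly at the point you flag as ``the main obstacle.''

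The paper closes this gap with no one-dimensional restriction and no dyadic analysis: it inscribes into the horn an ellipse $\E$ with semi-axis $\beta/3$ in the $\u$-direction and $\Lambda=\sqrt{\beta/6}\,m$ in the $\v$-direction, centered at depth $\beta/3+\delta/2$ from the boundary point $\x+\delta\u$; the elementary bound $2\tfrac{\beta}{3}(t-\tfrac{\delta}{2})-(t-\tfrac{\delta}{2})^2\le 2\tfrac{\beta}{3}t$ shows the half-width of $\E$ at depth $t$ is at most $m\sqrt{t}\le\min_i l_i(t)$, whence $\E\subset D$. Then \eqref{eqn:compare}, \eqref{eqn:affine} and the known two-sided disk estimate \eqref{eqn:ball} finish the proof: $\x$ sits at relative distance $\asymp\delta/\beta$ from $\partial\E$ (here $\delta>\sigma n^{-2}$ is used to stay in the admissible range of \eqref{eqn:ball}), which produces the factor $n^{-2}\sqrt{\delta}$ in one stroke. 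This inscribed ellipse, or any comparison body whose Christoffel function is already known pointwise, is precisely the two-dimensional input your sketch is missing; substituting it for the dyadic rectangles makes the rest of your reduction go through.
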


Remark that for the points very close to the boundary (within $\sigma n^{-2}$), the problem can be reduced to the case when $\delta>\sigma n^{-2}$ using~\cite[Proposition~1.4]{Pr}.

One can think of $l_i(t)$ as a modified parallel section function of $D$, and note that $l_1(t)+l_2(t)$ is the total length of the section of $D$ parallel to $\v$ through the point $\x+(\delta-t)\u$. Informally, the quantity on the right hand side of~\eqref{eqn:estimate-lower} describes the size of an ellipse that can be inscribed into $D$ so that $\x$ is sufficiently inside the ellipse. 

For the upper bound, by~\cite[Theorem~1.1]{Pr}, we have 
\begin{equation}\label{eqn:estimate-upper}
\lambda_n(D,\x) \le c(D,\sigma) n^{-2} \sqrt{\min\{ l_1(\delta) l_2(\delta), \delta \}}.
\end{equation}
(One can refer to~\cite{Pr} for specific geometric measurements of $D$ that affect the constant $c(D,\sigma)$ and are omitted here for simplicity.) Therefore, if
\begin{equation}\label{eqn:conditions}
l_1(t_1)\approx c(D) l_2(t_1)
\quad \text{and} \quad
\frac{l_i(t_1)}{\sqrt{t_1}}< c(D) \frac{l_i(t_2)}{\sqrt{t_2}},
 \quad \delta/2\le t_1\le t_2\le \beta, \quad i=1,2,
\end{equation}
then the estimates~\eqref{eqn:estimate-lower} and~\eqref{eqn:estimate-upper} match and we get \[\lambda_n(D,\x)\approx c(D,\beta,\sigma) n^{-2} l_1(\delta).\] As $l_1(\delta)\approx c(D)(l_1(\delta)+l_2(\delta))$, we see that the length of section of $D$ parallel to $\v$ through $\x$ is responsible for the magnitude of Christoffel function at $\x$, provided~\eqref{eqn:conditions} is satisfied. We remark that a natural choice for $\u$ would be the direction in which the distance from $\x$ to $\partial D$ is attained, although other choices are possible depending on specific situation. We believe that the class of convex bodies satisfying~\eqref{eqn:conditions} for some choice of $\u$ is rather wide. For this class, the combination of Theorem~\ref{thm:main-lower} and~\cite[Theorem~1.1]{Pr} provides geometric characterization of the behavior of Christoffel function at any point of the domain. We also note that for the upper estimate~\eqref{eqn:estimate-upper}, significantly fewer geometric measurements are needed (only $\delta$, $l_1(\delta)$ and $l_2(\delta)$) compared with~\eqref{eqn:estimate-lower} which requires the knowledge of $l_i(t)$ for $\delta/2\le t\le \beta$.

Next, we illustrate our main result for the domains $B_\alpha:=\{(x,y):|x|^\alpha+|y|^\alpha\le1\}$, $1<\alpha<2$. In particular, we show that these domains belong to the class of convex bodies described in the previous paragraph. To this end, for each interior point $\x$ within a constant distance from the boundary of the domain, we compute $l_i(D,\x,t)$ (see~\eqref{eqn:def_li}) explicitly up to a constant factor in terms of $t$ and $(x_0,y_0)$, a nearest point from the boundary to $\x$, i.e., $(x_0,y_0)$ is such that $|\x-(x_0,y_0)|=\min\{|\x-(x,y)|:(x,y)\in\partial B_\alpha\}$, where $|\cdot|$ is the Euclidean norm in $\R^2$. We note that, generally speaking, to find $l_i$ one needs to solve a non-linear equation. We hope that the techniques developed below to estimate $l_i$ for $B_\alpha$, which mostly result in equations of degree at most $2$, may prove useful for other planar convex bodies.

\begin{theorem}\label{cor:b-alpha-behaviour}
	Let $(x_0,y_0)\in\partial B_\alpha$, $1<\alpha<2$, $0\le x_0\le y_0$, $\u$ be the outward unit normal at $(x_0,y_0)$. There exists a constant $c_0(\alpha)>0$ depending only on $\alpha$ such that for 
	\begin{equation}\label{eqn:li-def-th2}
	l_i(t):= \max\{s:(x_0,y_0)-t\u+(-1)^is\v\in D\}, \quad i=1,2, \quad 0<t<1,
	\end{equation}
	where $\v$ is one of the two unit vectors orthogonal to $\u$, we have
	\begin{equation}\label{eqn:li-comp}
	l_i(t)\approx c(\alpha) t^{\frac12} (\max\{t, x_0^\alpha\})^{\frac1\alpha-\frac12}, \quad 0<t\le c_0(\alpha), \quad i=1,2.
	\end{equation}
	Further, if $\x\in B_\alpha\setminus\partial B_\alpha$ is such that $\delta:=|\x-(x_0,y_0)|=\min\{|\x-(x,y)|:(x,y)\in\partial B_\alpha\}$ and $\sigma n^{-2}\le \delta\le 1$, $\sigma>0$, then
	\begin{equation}\label{eqn:b-alpha}
	\lambda_n(B_\alpha,\x) \approx c(\alpha,\sigma) n^{-2} \delta^{\frac12} (\max\{\delta, x_0^\alpha\})^{\frac1\alpha-\frac12}.
	\end{equation}
\end{theorem}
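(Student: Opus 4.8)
The plan is to establish the geometric estimate~\eqref{eqn:li-comp} first, and then feed it into the general upper and lower bounds~\eqref{eqn:estimate-upper} and~\eqref{eqn:estimate-lower} to deduce~\eqref{eqn:b-alpha}. By the symmetries of $B_\alpha$ we may assume $0\le x_0\le y_0$, so that the normal direction $\u$ at $(x_0,y_0)$ points into the first quadrant and $y_0\ge 2^{-1/\alpha}$ is bounded below. The main work is computing $l_i(t)$, i.e. the half-lengths of the chord of $B_\alpha$ parallel to $\v$ at signed depth $t$ along $-\u$. Writing the point on the chord as $(x_0,y_0)-t\u+s\v$, the condition $\x+(-1)^is\v\in\partial B_\alpha$ becomes a one-variable equation $|X(s)|^\alpha+|Y(s)|^\alpha=1$ where $X,Y$ are affine in $s$; I expect to solve this equation asymptotically in the two regimes $t\lesssim x_0^\alpha$ and $t\gtrsim x_0^\alpha$ separately, which is exactly where the $\max\{t,x_0^\alpha\}$ in~\eqref{eqn:li-comp} comes from.

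The key technical device is to reduce to the local behavior of the boundary curve near $(x_0,y_0)$. Near this point the boundary has curvature of order $x_0^{\alpha-2}$ when $x_0$ is small (the curvature degenerates near the flat part of $\partial B_\alpha$ along the axis) and of order $1$ when $x_0\approx y_0$. The quantity $x_0^\alpha$ serves as the threshold between the regime where the chord sees only the local parabolic approximation of the boundary (the ``small depth'' regime $t\lesssim x_0^\alpha$, giving $l_i(t)\approx \sqrt{t}\,x_0^{1-\alpha/2}=\sqrt{t}\,(x_0^\alpha)^{1/\alpha-1/2}$) and the regime where the chord is long enough to feel the global $|x|^\alpha$ flatness of the curve (the ``large depth'' regime $t\gtrsim x_0^\alpha$, giving $l_i(t)\approx t^{1/\alpha}=\sqrt{t}\cdot t^{1/\alpha-1/2}$). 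I would carry this out by a careful Taylor/Newton-polygon analysis of the boundary equation, tracking that $l_1$ and $l_2$ differ only by a bounded factor so that the first condition in~\eqref{eqn:conditions} holds; the symmetry $0\le x_0\le y_0$ keeps both endpoints of the chord in a controlled part of the boundary.

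With~\eqref{eqn:li-comp} in hand, verifying the monotonicity condition in~\eqref{eqn:conditions} is a direct computation: one checks that $t\mapsto l_i(t)/\sqrt t=(\max\{t,x_0^\alpha\})^{1/\alpha-1/2}$ is nondecreasing in $t$ (here $1/\alpha-1/2>0$ since $1<\alpha<2$), so the inner minimum in~\eqref{eqn:estimate-lower} over $\delta/2\le t\le\beta$ is attained near $t\approx\delta$ and equals a constant multiple of $(\max\{\delta,x_0^\alpha\})^{1/\alpha-1/2}$. Since $\delta=|\x-(x_0,y_0)|$ with $\u$ the inward normal, the depth parametrization in~\eqref{eqn:li-def-th2} matches the $l_i(D,\x,t)$ of Theorem~\ref{thm:main-lower} up to the trivial shift $\delta-t\leftrightarrow t$, so~\eqref{eqn:estimate-lower} yields the lower bound $\lambda_n\gtrsim n^{-2}\sqrt\delta\,(\max\{\delta,x_0^\alpha\})^{1/\alpha-1/2}$. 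For the matching upper bound I evaluate~\eqref{eqn:estimate-upper}: using $l_1(\delta)l_2(\delta)\approx\delta\,(\max\{\delta,x_0^\alpha\})^{2/\alpha-1}$ one checks $\min\{l_1(\delta)l_2(\delta),\delta\}\approx l_1(\delta)l_2(\delta)$ (since $(\max\{\delta,x_0^\alpha\})^{2/\alpha-1}\lesssim 1$ as $\delta,x_0^\alpha\le c_0(\alpha)$ and $2/\alpha-1>0$), giving the same order. The main obstacle I anticipate is the case analysis in the chord-length computation: controlling the solution of the nonlinear boundary equation uniformly as $\x$ ranges over the whole boundary, in particular handling the transition across $t\approx x_0^\alpha$ and ensuring the constants stay independent of $(x_0,y_0)$ and depend only on $\alpha$.
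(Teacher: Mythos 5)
Your overall strategy coincides with the paper's: establish the chord-length estimate \eqref{eqn:li-comp} by splitting at the threshold $t\approx x_0^\alpha$ (local parabolic approximation with curvature of order $x_0^{\alpha-2}$ for small $t$, global $t^{1/\alpha}$ flatness of $\partial B_\alpha$ for large $t$), then feed it into Theorem~\ref{thm:main-lower} and \eqref{eqn:estimate-upper}. Your verification that $l_i(t)/\sqrt{t}\approx(\max\{t,x_0^\alpha\})^{1/\alpha-1/2}$ is essentially nondecreasing and that $\min\{l_1(\delta)l_2(\delta),\delta\}\approx l_1(\delta)l_2(\delta)$ is correct and matches how the paper combines the two general bounds. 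One concrete omission in this reduction: Theorem~\ref{thm:main-lower} requires $\delta<\beta/2$, and since \eqref{eqn:li-comp} is only available for $t\le c_0(\alpha)$ you must take $\beta=c_0(\alpha)$; the theorem however allows $\delta$ up to $1$, so the range $c_0(\alpha)/2\le\delta\le1$ needs a separate lower bound (the paper inscribes the disk $\delta B_2+\x$ in $B_\alpha$ and uses \eqref{eqn:compare}, \eqref{eqn:affine} and \eqref{eqn:ball}).

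The substantive issue is that the proof of \eqref{eqn:li-comp} itself is left at the level of a plan (``Taylor/Newton-polygon analysis''), whereas it is where essentially all of the paper's work lies. You correctly predict the answers in both regimes, but you do not indicate how the nonlinear boundary equation is actually controlled uniformly in $(x_0,y_0)$. For comparison, the paper's execution in the regime $t\le c_2(\alpha)x_0^\alpha$ squeezes $f(x)=(1-|x|^\alpha)^{1/\alpha}$ between two tangent parabolas $P(m,x)$ with $m=f''(x_0/2)$ and $m=f''(2x_0)$ on $[x_0/2,2x_0]$ (using the sign of $f'''$ to locate the extrema of $f''$), solves the resulting quadratic equations exactly, and then must check a posteriori that the intersection points really lie in $[x_0/2,2x_0]$ and that the horizontal offset $0\le x_0-x_1\le\sqrt{2}\,t x_0^{\alpha-1}$ between the foot of the normal and $x_0$ is negligible against $t^{1/2}x_0^{1-\alpha/2}$; these checks dictate the choice of $c_2(\alpha)$. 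In the regime $t\ge c_2(\alpha)x_0^\alpha$ parabolas no longer work (the chord leaves the region where the curvature is comparable to $x_0^{\alpha-2}$), and the paper switches to comparing $f$ with tangent lines and using concavity of $f^{-1}$, with separate arguments for the two chord endpoints $x_2$ and $x_3$ because the near endpoint may cross $x=0$. There is also a third case $x_0\in[c_1(\alpha)/2,2^{-1/\alpha}]$ where the curvature is bounded above and below. None of these uniformity checks is routine; you flag this case analysis as ``the main obstacle,'' and indeed it is precisely the content of the theorem, so as written the proposal is a correct outline rather than a proof.
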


The behavior of $\lambda_n(B_\alpha,\x)$ on $x=0$ and $y=0$ which contain the ``least smooth'' points $(0,\pm1)$ and $(\pm1,0)$ of $\partial B_\alpha$ has been studied in~\cite{Kr} and~\cite{Pr} and was essentially shown to be $n^{-2}\delta^{\frac1\alpha}$. In contrast, along $x=\pm y$, where the boundary is $C^2$ smooth, the behavior is $n^{-2}\delta^{\frac12}$, see~\cite[Proposition~3.3]{Pr}.    
Theorem~\ref{cor:b-alpha-behaviour} fills this gap by computing Christoffel function everywhere inside $B_\alpha$ and specifies how exactly the transition between different smoothness affects the behavior of Christoffel function. Also, Theorem~\ref{cor:b-alpha-behaviour} gives an affirmative answer to~\cite[Conjecture~3.4]{Pr}, moreover, the theorem provides the right hand side of~\cite[(3.4)]{Pr} up to a constant factor. 

While the results in this note are obtained for the uniform weight, they imply asymptotics of Christoffel function for other classes of weights using universality in the bulk~\cite{Kr-Lu}. 

\section{Proofs}

We begin with some preliminaries. By~\eqref{def_lambda},
\begin{equation}
\label{eqn:compare}
\text{if }D\subset \wt D\subset \R^2,\text{ then }
\lambda_n(D,\x)\le \lambda_n(\wt D,\x), \quad \x\in \wt D,
\end{equation}
and
\begin{equation}
\label{eqn:affine}
\lambda_n(TD,T\x)=\lambda_n(D,\x)|\det T|, \quad \x\in D,
\end{equation}
where $T\x=\x_0+A\x$ is any non-degenerate affine transform of $\R^2$, i.e., $\x_0\in\R^2$ and $A$ is a $2\times 2$ matrix, $\det T:=\det A\ne0$.

Let $B_2:=\{\x:|\x|\le1 \}$ denote the unit ball in $\R^2$. For $\sigma>0$, by~\cite[(2.3)]{Pr},
\begin{equation}\label{eqn:ball}
\lambda_n(B_2,(x,y))\approx c(\sigma) n^{-2} \sqrt{1-x^2-y^2}, \quad (x,y)\in (1-\sigma n^{-2})B_2.
\end{equation}

\begin{proof}[Proof of Theorem~\ref{thm:main-lower}]
	Denote
	\[
	\Lambda:= \sqrt{\frac{\beta}{6}} \min_{i=1,2}\min_{\delta/2\le t \le \beta}\frac{l_i(t)}{\sqrt{t}}.
	\]
	Consider the ellipse
	\[
	\E:=\left\{\x-t\u+s\v: \left(\frac{\frac\beta3+\frac\delta2-t}{\frac\beta3}\right)^2+ \left(   \frac{s}{\Lambda} \right)^2 \le 1 \right\}.
	\]
	If $t$ and $s$ satisfy the inequality from the definition of $\E$, then $\frac{\delta}{2}\le t<\frac{11}{12}\beta$ and
	\begin{align*}
	|s| &\le \Lambda \sqrt{1- \left(\frac{\frac\beta3+\frac\delta2-t}{\frac\beta3}\right)^2} 
	= \frac{3\Lambda}{\beta} \sqrt{2\tfrac{\beta}{3}\left(t-\tfrac\delta2\right)- \left(t-\tfrac\delta2\right)^2 } \\ 
	&\le \frac{3\Lambda}{\beta} \sqrt{2\tfrac{\beta}{3}\left(t-\tfrac\delta2\right)}
	\le \Lambda \sqrt{\frac{6}{\beta}} \sqrt{t} \le \min_{i=1,2} l_i(t),
	\end{align*} 
	so $\E\subset D$. Note that for an affine transform $T$ such that $T\E=B_2$ we have $\det T=\frac{3}{\Lambda\beta}$. Now by~\eqref{eqn:compare}, \eqref{eqn:affine} and~\eqref{eqn:ball},
	\[
	\lambda_n(D,\x)\ge \lambda_n(\E,\x)=\frac{\Lambda\beta}{3} \lambda_n(B_2,T\x) \approx \Lambda\beta c(\sigma) n^{-2}\sqrt{\delta},
	\] 
	implying~\eqref{eqn:estimate-lower}.
\end{proof}

\begin{proof}[Proof of Theorem~\ref{cor:b-alpha-behaviour}.]
	In addition to already set notations regarding constants, throughout this proof we use $c_j(\alpha)$, $j\ge0$, to denote different specific positive constants depending on $\alpha$ only. Note that $c(\alpha)$ may have different values at different occurrences, while for a fixed $j$ the value of $c_j(\alpha)$ may not. We emphasize that all the constants below do not depend on $x_0$ or $t$. We can assume that $v_x>0$ in $\v=(v_x,v_y)$ from~\eqref{eqn:li-def-th2}.

First we will show how~\eqref{eqn:li-comp} implies~\eqref{eqn:b-alpha}. Assuming~\eqref{eqn:li-comp}, we can apply Theorem~\ref{thm:main-lower} with $\beta=c_0(\alpha)$ and obtain the lower bound in~\eqref{eqn:b-alpha} if $\delta\le c_0(\alpha)/2$. If $\delta>c_0(\alpha)/2$, we note that $\delta B_2+\x\subset B_\alpha$, so by~\eqref{eqn:compare}, \eqref{eqn:affine} and~\eqref{eqn:ball}, 
\[
\lambda_n(B_\alpha,\x)\ge \lambda_n(\delta B_2+\x,\x)=\delta^2 \lambda_n(B_2,(0,0))\ge c(\alpha) n^{-2},
\]	
which proves the lower bound in~\eqref{eqn:b-alpha}. The upper bound in~\eqref{eqn:b-alpha} readily follows from~\eqref{eqn:estimate-upper}.

It remains to prove~\eqref{eqn:li-comp}. We remark that one can establish~\eqref{eqn:li-comp} for a wider range of $t$, e.g. for $0<t<\frac54$, but this requires some additional technicalities and is not needed for~\eqref{eqn:b-alpha}, which was our main goal.

We will select $c_0(\alpha)$ in the end of the proof. Now fix $t$ with $0<t\le c_0(\alpha)$ and set $(x_1,y_1)=(x_0,y_0)-t\u$. We assume that $x_0>0$, the case $x_0=0$ will be considered later.
		
	Suppose $y=l(x)$ is the equation of the line $\{(x_0,y_0)-{t} \u+(-1)^is\v:s\in\R\}$. Then
\[
l(x)=f(x_0)+f'(x_0)(x-x_0)-{t}\sqrt{1+(f'(x_0))^2}, 
\]
where $f(x)=(1-|x|^\alpha)^{\frac1\alpha}$ describes the upper half of $\partial B_\alpha$. We have $x_1=x_0+\frac{{t} f'(x_0)}{\sqrt{1+(f'(x_0))^2}}$.	Since $y_0=f(x_0)\ge x_0$, we obtain $x_0\le 2^{-\frac1\alpha}\le y_0$. For $0<x<1$, we have $f'(x)=-x^{\alpha-1}(1-x^\alpha)^{\frac1\alpha-1}$, and for $0<x<2^{-\frac1\alpha}$, we get 
$1<(1-x^\alpha)^{\frac1\alpha-1}<\sqrt{2}$. 
So,
\begin{equation}\label{eqn:f'(x_0)-bounds}
x_0^{\alpha-1}< -f'(x_0)< \sqrt{2}x_0^{\alpha-1}\quad\text{and}\quad
1<\sqrt{1+(f'(x_0))^2}< \sqrt{3}. 
\end{equation}
Now we can verify that $l(\pm1)>0=f(\pm1)$ provided ${t}<\frac{2^{-\frac1\alpha}-2^{-\frac12}}{\sqrt{3}}$, so we will require that $c_0(\alpha)<\frac{2^{-\frac1\alpha}-2^{-\frac12}}{\sqrt{3}}$. Hence, letting $x_2<x_3$ be the $x$-coordinates of the points of intersection of the line $y=l(x)$ with $\partial B_\alpha$, we obtain that $l(x_j)=f(x_j)$ and $l_{j-1}({t})=\sqrt{1+(f'(x_0))^2} (-1)^{j-1}(x_j-x_1)$, $j=2,3$. Therefore, due to~\eqref{eqn:f'(x_0)-bounds}, we need to show  $|x_j-x_1|\approx c(\alpha) {t}^{\frac12} (\max\{{t}, x_0^\alpha\})^{\frac1\alpha-\frac12}$, $j=2,3$.

We note that~\eqref{eqn:f'(x_0)-bounds} implies
\begin{equation}\label{eqn:x1x0-bound}
0\le x_0-x_1\le \sqrt{2} {t} x_0^{\alpha-1}.
\end{equation}

We define tangent parabolas to $y=f(x)$ at $x=x_0$ with varying quadratic term as follows:
\[
P(m,x):=f(x_0)+f'(x_0)(x-x_0)+\frac{m}{2}(x-x_0)^2.
\]
Note that for $m<0$ the equation $l(x)=P(m,x)$ has two solutions 
\begin{equation}\label{eqn:solutions}
x=x_0\pm \sqrt{\frac{2{t}\sqrt{1+(f'(x_0))^2}}{-m}}.
\end{equation}
Further, for any interval $[a,b]\subset [0,1]$ containing $x_0$, we have
\begin{equation}\label{eqn:parabolas}	
P(\min\{f''(t):t\in[a,b]\},x)\le f(x)\le P(\max\{f''(t):t\in[a,b]\},x), \quad x\in[a,b].
\end{equation}	

It is straightforward to compute that
\[
f'''(x)=-(\alpha-1)x^{\alpha-3}(1-x^\alpha)^{\frac1\alpha-3}((\alpha-2)+(\alpha+1)x^\alpha),
\]
so $f'''(x)>0$ for $x\in(0,c_1(\alpha))$, where $c_1(\alpha):=(\frac{2-\alpha}{1+\alpha})^{\frac1\alpha}$.

Now we show that
\begin{equation}\label{eqn:l1-small-delta}
\text{if }{t}\le c_2(\alpha) x_0^\alpha\text{ and } x_0\le c_1(\alpha)/2,\text{ then } |x_j-x_1|\approx c(\alpha) {t}^{\frac12} x_0^{1-\frac\alpha2}, \quad j=2,3,
\end{equation}
where $c_2(\alpha)$ will be selected later. By~\eqref{eqn:parabolas},
\begin{equation}\label{eqn:par-above-below-l1}
P(f''(x_0/2),x) \le f(x)\le P(f''(2x_0),x), \quad x\in [x_0/2,2x_0].
\end{equation}
Let $z_1<z_2$ and $z_3<z_4$ be the solutions of the quadratic equations $l(x)=P(f''(x_0/2),x)$ and $l(x)=P(f''(2x_0),x)$, respectively. Since $-f''(2^{\pm1}x_0)\approx c(\alpha) x_0^{\alpha-2}$, by~\eqref{eqn:solutions} we see that $c_3(\alpha){t}^\frac12 x_0^{1-\frac\alpha2} \le |z_j-x_0|\le c_4(\alpha) {t}^\frac12 x_0^{1-\frac\alpha2}$, $j=1,2,3,4$, for some positive constants $c_3(\alpha)$ and $c_4(\alpha)$ independent of the forthcoming choice of $c_2(\alpha)$. As ${t}^{\frac12}x_0^{1-\frac{\alpha}{2}} \le \sqrt{c_2(\alpha)} x_0$, if we impose that $c_2(\alpha)<(2c_4(\alpha))^{-2}$, then $x_0/2<z_3$ and $z_4<2x_0$. Now~\eqref{eqn:par-above-below-l1} implies that $z_3<x_2<z_1$ and $z_2<x_3<z_4$, so 
$c_3(\alpha){t}^\frac12 x_0^{1-\frac\alpha2} \le |x_j-x_0|\le c_4(\alpha) {t}^\frac12 x_0^{1-\frac\alpha2}$, $j=2,3$. If $c_2(\alpha)<(c_3(\alpha))^2/8$, then since ${t} x_0^{\alpha-1}\le \sqrt{c_2(\alpha)} {t}^{\frac12} x_0^{1-\frac\alpha2}$ we can use~\eqref{eqn:x1x0-bound} to see that $0\le x_0-x_1\le \frac{c_3(\alpha)}{2} {t}^\frac12 x_0^{1-\frac\alpha2}$ and conclude that~\eqref{eqn:l1-small-delta} holds provided $c_2(\alpha)$ is sufficiently small. Namely, we choose arbitrary $c_2(\alpha)>0$ satisfying $c_2(\alpha)<\min\{(2c_4(\alpha))^{-2},(c_3(\alpha))^2/8\}$.

Next we claim that
	\begin{equation}\label{eqn:C2-case}
	\text{if }x_0\in[c_1(\alpha)/2,2^{-\frac1\alpha}],\text{ then }|x_j-x_1|\approx c(\alpha){t}^{\frac12}, \quad j=2,3.
	\end{equation}
	The proof is similar to that of~\eqref{eqn:l1-small-delta} with certain differences as we will now outline. The interval $[x_0/2,2x_0]$ is replaced with $[c_1(\alpha)/3,1/2]$ and then we use that $c_5(\alpha)\le -f''(x)\le c_6(\alpha)$ for $x\in[c_1(\alpha)/3,1/2]$ and some positive constants $c_5(\alpha)$ and $c_6(\alpha)$, so that
	\[
	P(-c_6(\alpha),x) \le f(x) \le P(-c_5(\alpha),x), \quad x\in[c_1(\alpha)/3,1/2].
	\]
	Further, instead of requiring that $c_2(\alpha)$ is sufficiently small as was done for~\eqref{eqn:l1-small-delta}, we will require $c_0(\alpha)$ (and, hence, $t$) not to exceed a specific constant depending on $\alpha$ only, chosen to ensure that the analogs of $z_3$ and $z_4$ belong to $[c_1(\alpha)/3,1/2]$, and that $x_1-x_0$ does not exceed a sufficiently small constant times $t^{\frac12}$. We omit the details.

The proofs of the remaining estimates are different from the proofs of~\eqref{eqn:l1-small-delta} and~\eqref{eqn:C2-case} as we will mostly compare $f$ with lines rather than with parabolas. Define ${\tilde x}>0$ to be the point where $f({\tilde x})=l(x_0)$. It is straightforward that
\begin{equation}
\label{eqn:f'(x_0)-behav}
\frac{x^\alpha}{\alpha} \le 1-f(x)\le 2^{1-\frac1\alpha} \frac{x^\alpha}\alpha, \quad 0<x<2^{-\frac1\alpha}.
\end{equation}
This implies $1-f(x_0)\approx c(\alpha) x_0^\alpha$. By~\eqref{eqn:f'(x_0)-bounds}, we have $f(x_0)-l(x_0)\approx {t}$. So, if ${t}\ge c_2(\alpha) x_0^\alpha$, then $1-l(x_0)\approx c(\alpha){t}$ and due to~\eqref{eqn:f'(x_0)-behav} we obtain the following:
\begin{equation}\label{eqn:x10}
	\text{if }{t}\ge c_2(\alpha) x_0^\alpha,\text{ then }{\tilde x}\approx c(\alpha){t}^{\frac1\alpha}.
\end{equation} 

	Next we establish that
	\begin{equation}\label{eqn:l1-large-delta}
	\text{if }{t}\ge c_2(\alpha) x_0^\alpha\text{ and } x_0\le c_1(\alpha)/2,\text{ then } x_3-x_1\approx c(\alpha) {t}^{\frac1\alpha}.
	\end{equation}		 	
	Let $f^{-1}$ be the inverse of $f$ on $[0,1]$. Clearly, $f^{-1}$ is concave. Therefore, $f^{-1}(y)-f^{-1}(y+h)$ is decreasing in $y$ for fixed $h>0$. Applying this with $h=f(x_0)-l(x_0)$, we see that ${\tilde x}-x_0=f^{-1}(l(x_0))-f^{-1}(f(x_0))>f^{-1}(1-h)-f^{-1}(1)\approx c(\alpha) h^{\frac1\alpha}\approx c(\alpha){t}^{\frac1\alpha}$. Since $x_3>{\tilde x}$, by~\eqref{eqn:x1x0-bound} we have $x_3-x_1>{\tilde x}-x_0$, which yields the lower bound on $x_3-x_1$ in~\eqref{eqn:l1-large-delta}. 
	
	To estimate $x_3$ from above, we consider $L(x)$, the tangent line to $f$ at ${\tilde x}$, which, by concavity of $f$, satisfies $f(x)\le L(x)$, $x\in[0,1]$, and has the slope smaller than the slope of $l$. Therefore, letting ${\bar x}$ be the point of intersection of $l$ and $L$, we have the bound $x_3<{\bar x}$ and compute that
	\begin{equation}\label{eqn:x11}
	{\bar x}=\frac{f({\tilde x})-f(x_0)+{t}\sqrt{1+(f'(x_0))^2}+x_0f'(x_0)-{\tilde x}f'({\tilde x})}{f'(x_0)-f'({\tilde x})}.
	\end{equation}
	Due to~\eqref{eqn:x10} and ${\tilde x}-x_0\approx c(\alpha){t}^{\frac1\alpha}$,  we estimate $f'(x_0)-f'({\tilde x})=(x_0-{\tilde x})f''(\xi)\ge c(\alpha) {t}^{\frac1\alpha} {\tilde x}^{\alpha-2}\ge c(\alpha){t}^{1-\frac1\alpha}$, where $\xi\in(x_0,{\tilde x})$. Using~\eqref{eqn:x10} and $x_0\le c(\alpha){t}^{\frac1\alpha}$, it is rather straightforward to show that the numerator of~\eqref{eqn:x11} does not exceed $c(\alpha){t}$  leading to ${\bar x}\le c(\alpha){t}^{\frac1\alpha}$. Due to $x_0\le c(\alpha){t}^{\frac1\alpha}$ and~\eqref{eqn:x1x0-bound}, we have $-x_1<x_0-x_1\le c(\alpha){t}^{2-\frac1\alpha}\le  c(\alpha){t}^{\frac1\alpha}$, so, in summary, $x_3-x_1\le {\bar x} -x_1\le c(\alpha){t}^{\frac1\alpha}$, which is the upper bound on $x_3-x_1$ in~\eqref{eqn:l1-large-delta}.

	Now we prove that
	\begin{equation}\label{eqn:l2-large-delta-above}
	\text{if } {t} \ge c_2(\alpha) x_0^\alpha\text{ and } x_0\le c_1(\alpha)/2,\text{ then } x_1-x_2\approx c(\alpha) {t}^{\frac1\alpha}.
	\end{equation}
	Since $f$ is even and $l$ is decreasing, we have $x_2>-{\tilde x}$ (recall that ${\tilde x}>0$ is such that $f({\tilde x})=l(x_0)<l(x_1)$), so taking~\eqref{eqn:x1x0-bound} and~\eqref{eqn:x10} into account, we establish the upper bound on $x_1-x_2$ in~\eqref{eqn:l2-large-delta-above} as follows:
	\[
	x_1-x_2\le x_1+{\tilde x} \le x_0+{\tilde x} \le c(\alpha){t}^{\frac1\alpha}.
	\]
		
	Since $f$ is concave, we have $\{x:l(x)\le f(x)\}=[x_2,x_3]$. Therefore, to prove the lower bound on $x_1-x_2$ in~\eqref{eqn:l2-large-delta-above}, it is enough to show that there exists sufficiently small $c_7(\alpha)>0$ such that $l(x_1- c_7(\alpha){t}^{\frac1\alpha})<f(x_1- c_7(\alpha){t}^{\frac1\alpha})$, which would imply $x_1-x_2 \ge c_7(\alpha) {t}^{\frac1\alpha}$. If $c_7(\alpha)$ satisfies $\sqrt{2}c_7(\alpha) c_2(\alpha)^{\frac1\alpha-1}<\frac1{\sqrt{3}}-\frac12$, then by~\eqref{eqn:f'(x_0)-bounds} we get
	\begin{align*}
	l(x_1-c_7(\alpha){t}^{\frac1\alpha}) &= f'(x_0)(-c_7(\alpha){t}^{\frac1\alpha})+f(x_0)-\frac{{t}}{\sqrt{1+(f'(x_0))^2}} \\
	&< \sqrt{2} c_7(\alpha) x_0^{\alpha-1} {t}^{\frac1\alpha} + f(x_0) - \frac{{t}}{\sqrt{3}} \\ 
	&< \sqrt{2} c_7(\alpha) c_2(\alpha)^{\frac{1-\alpha}{\alpha}} {t} + f(x_0) - \frac{{t}}{\sqrt{3}} <f(x_0)-\frac{{t}}{2}.
	\end{align*}
	Next, if $x_1-c_7(\alpha){t}^{\frac1\alpha}\ge0$, then $f(x_0)-\frac{t}2<f(x_0)<f(x_1)<f(x_1-c_7(\alpha){t}^{\frac1\alpha})$  as $f$ is decreasing on $[0,1]$. Before proceeding, we note that by~\eqref{eqn:x1x0-bound} and ${t} \ge c_2(\alpha) x_0^\alpha$ we have $-x_1\le x_0-x_1\le \sqrt{2} c_2(\alpha)^{\frac1\alpha-1}t^{2-\frac1\alpha} \le c_7(\alpha) t^{\frac1\alpha}<\frac12$ if we assume that $c_0(\alpha)\le (c_7(\alpha) c_2(\alpha)^{1-\frac1\alpha}2^{-\frac12})^{\frac{\alpha}{2(\alpha-1)}}$ and $c_0(\alpha)\le (2c_7(\alpha))^{-\alpha}$. So, if $x_1-c_7(\alpha){t}^{\frac1\alpha}<0$, then by monotonicity of $f$ on $[-1,0]$, we see that $f(x_1-c_7(\alpha){t}^{\frac1\alpha})>f(-2c_7(\alpha){t}^{\frac1\alpha})=f(2c_7(\alpha){t}^{\frac1\alpha})$. Now we conclude as follows:
	\[
	f(x_0)-\frac{t}2\le 1 -\frac{{t}}{2} <1-\frac{2^{1-\frac1\alpha}2^\alpha c_7(\alpha)^\alpha}{\alpha}{t} <f(2c_7(\alpha){t}^{\frac1\alpha}),
	\]
	where $\frac{2^{1+\alpha-\frac1\alpha}c_7(\alpha)^\alpha}{\alpha}<\frac12$ was used for the second step, and~\eqref{eqn:f'(x_0)-behav} was used in the last step under the assumption that $c_0\le 2^{-1-\alpha}c_7(\alpha)^{-\alpha}$. We can choose $c_7(\alpha)>0$ arbitrarily to satisfy $c_7(\alpha)<\min\{c_2(\alpha)^{\frac1\alpha-1}(\frac1{\sqrt{6}}-\frac1{2\sqrt{2}}),\alpha^{\frac1\alpha}2^{\frac1{\alpha^2}-\frac2\alpha-1}\}$. Now~\eqref{eqn:l2-large-delta-above} is established.

	If $x_0=0$ and $c_0(\alpha)\le 1-2^{-\frac1\alpha}$, we can invoke~\eqref{eqn:f'(x_0)-behav} to immediately obtain that $l_i(t)=f^{-1}(1-t)\approx c(\alpha) t^{\frac1\alpha}$, $i=1,2$.
	
	Now we choose $c_0(\alpha)>0$ so that all the previously stated requirements (which were estimates from above on $c_0(\alpha)$) are fulfilled. The proof of~\eqref{eqn:li-comp} is complete as a combination of~\eqref{eqn:l1-small-delta}, \eqref{eqn:C2-case}, \eqref{eqn:l1-large-delta}, \eqref{eqn:l2-large-delta-above} if $x_0>0$ and the argument of the previous paragraph if $x_0=0$.		
\end{proof}

\begin{bibsection}
\begin{biblist}

%



\bib{Di-Pr}{article}{
   author={Ditzian, Z.},
   author={Prymak, A.},
   title={On Nikol'skii inequalities for domains in $\mathbb{R}^d$},
   journal={Constr. Approx.},
   volume={44},
   date={2016},
   number={1},
   pages={23--51},
}

\bib{Kr}{article}{
   author={Kro{\'o}, Andr{\'a}s},
   title={Christoffel functions on convex and starlike domains in $\mathbb{R}^d$},
   journal={J. Math. Anal. Appl.},
   volume={421},
   date={2015},
   number={1},
   pages={718--729},
}

\bib{Kr-Lu}{article}{
   author={Kro\'o, A.},
   author={Lubinsky, D. S.},
   title={Christoffel functions and universality in the bulk for
   multivariate orthogonal polynomials},
   journal={Canad. J. Math.},
   volume={65},
   date={2013},
   number={3},
   pages={600--620},
}


\bib{Pr}{article}{
	author={Prymak, A.},
	title={Upper estimates of Christoffel function on convex domains},
	journal={J. Math. Anal. Appl.},
	volume={455},
	date={2017},
	number={2},
	pages={1984--2000},
}


%

\bib{Xu}{article}{
   author={Xu, Yuan},
   title={Asymptotics for orthogonal polynomials and Christoffel functions
   on a ball},
   journal={Methods Appl. Anal.},
   volume={3},
   date={1996},
   number={2},
   pages={257--272},
}

\end{biblist}
\end{bibsection}

\end{document}